\documentclass[a4paper,12pt]{article}
\usepackage[utf8]{inputenc}
\usepackage[english]{babel}
\usepackage{latexsym,amssymb,amsmath,amsthm,amsfonts,euscript}

\newtheorem{theorem}{Theorem}
\newtheorem{prop}[theorem]{Proposition}

\title{Tolstov's Theorem in the Commutative Banach Algebra $\mathbb{A}_3$}

\author{M.\,V.~Tkachuk\\
Institute of Mathematics \\ of the National Academy of Sciences of Ukraine\\
maxim.v.tkachuk@gmail.com}

\begin{document}

\maketitle

\vskip 5mm

\rightline{UDC 517.54, MSC 30G35}

\begin{abstract}
In this paper, the conditions of monogenicity are weakened for functions
with values in a three-dimensional commutative algebra over the field of
complex numbers. By monogenicity we mean continuity and the existence of
the G\^ateaux derivative.
\end{abstract}

\textbf{1. Introduction.}
In the algebra of complex numbers $\mathbb{C}$, a function
$F \colon \mathbb{C} \to \mathbb{C}$ is called monogenic at a point
$\xi_0 \in \mathbb{C}$ if the finite limit
\begin{equation}\label{compl-der}
\lim_{\xi\to\xi_0}
\frac{F(\xi)-F(\xi_0)}{\xi-\xi_0}
\end{equation}
exists. This limit is called the derivative of $F$ at the point $\xi_0$.
A function that is monogenic at every point of a domain
$D \subset \mathbb{C}$ is called holomorphic in this domain
(see \cite{Goursa}).

The problem of weakening the conditions of monogenicity for functions of
a complex variable has been studied by
H.~Bohr \cite{Bohr},
H.~Rademacher \cite{Rademacher},
D.\,E.~Menshov \cite{menshov-1},
V.\,S.~Fedorov \cite{fedorov},
G.\,P.~Tolstov \cite{Tolstov},
Yu.\,Yu.~Trokhimchuk \cite{Trokhimchuk,zb_trokhinchuk},
G.\,Kh.~Sindalovskii \cite{Sindalovski},
D.\,S.~Telyakovskii \cite{Teliakovski},
E.\,P.~Dolzhenko \cite{Dolgenko},
and M.\,T.~Brodovich \cite{Brodovich}.

The purpose of this paper is to weaken the conditions of monogenicity
for functions taking values in commutative banach algebra $\mathbb{A}_3$ over the field of complex numbers by generalizing Tolstov's theorem.

\begin{theorem}[G.~Tolstov \cite{Tolstov}]
Let
\[
f(z)=u(x,y)+iv(x,y)
\]
be a locally bounded function of the complex variable $z=x+iy$ in a
domain $D \subset \mathbb{C}$. Suppose that the partial derivatives
\[
\frac{\partial u}{\partial x},\quad
\frac{\partial u}{\partial y},\quad
\frac{\partial v}{\partial x},\quad
\frac{\partial v}{\partial y}
\]
exist everywhere in $D$ and satisfy the Cauchy--Riemann equations almost
everywhere:
\[
\frac{\partial u}{\partial x}
=
\frac{\partial v}{\partial y},
\qquad
\frac{\partial u}{\partial y}
=
-\frac{\partial v}{\partial x}.
\]
Then the function $f(z)$ is analytic in $D$.
\end{theorem}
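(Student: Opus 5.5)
We plan to follow Tolstov's classical scheme, which rests on a Baire category argument combined with a Looman--Menchoff type argument applied to the exceptional set. Let $E\subset D$ be the set of points $z_0$ such that $f$ is not analytic in any neighbourhood of $z_0$. By definition $E$ is relatively closed in $D$, and the goal is to show $E=\emptyset$.

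The first step is to handle the case where $f$ is Lipschitz in a disc. If $f$ is locally bounded and has all four partials everywhere in $D$, write $D=\bigcup_n F_n$, where $F_n$ is the set of points $z$ such that $|f(z+h)-f(z)|\le n|h|$ for all real $h$ and all purely imaginary $h$ with $|h|\le 1/n$. Each $F_n$ is closed relative to $D$; local boundedness enters here through the standard semicontinuity argument for difference quotients, and if necessary we first pass to the closed sets on which $|f|\le n$. On a disc $K$ where $f$ is Lipschitz along horizontal and vertical segments, $f$ is absolutely continuous on lines. Since the Cauchy--Riemann equations hold a.e., Green's formula gives $\int_{\partial R} f\,dz=0$ for every rectangle $R\subset K$, using Fubini along horizontal and vertical lines. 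Continuity of $f$ on $K$ requires an extra argument, because Lipschitz behaviour in each separate variable together with boundedness does not by itself give continuity. We expect this to be the \emph{main technical obstacle}. We will handle it by mollifying: the mollified function $f_\varepsilon=f*\varphi_\varepsilon$ satisfies $\bar\partial f_\varepsilon=(\bar\partial f)*\varphi_\varepsilon=0$, since the distributional derivatives coincide with the a.e.\ ones for functions that are absolutely continuous on lines with bounded partials. Hence $f_\varepsilon$ is holomorphic. Letting $\varepsilon\to0$, $f$ agrees a.e.\ with a holomorphic function $g$. The existence of partials everywhere, combined with the Lipschitz bounds on lines, then forces $f=g$ everywhere: along a.e.\ line $f=g$ a.e., and continuity on lines extends this to all points.

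The second step applies Baire's theorem to the exceptional set. Suppose $E\neq\emptyset$. Since $E$ is a complete metric space in the relative sense, Baire's theorem yields $n$ and a disc $K$ centred at a point of $E$ with $E\cap K\subset F_n$. Our aim is to show $f$ is analytic in a smaller disc, which contradicts the choice of that point as an element of $E$.

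The third step treats $f$ on $K\setminus E$, which is an open set where $f$ is analytic. On each component of this set we need bounds on $f'$ near $E$. Take $z\in K\setminus E$, and let $w\in E$ be a closest point of $E$ on the horizontal or vertical line through $z$. Along that segment $f$ is holomorphic up to $w$, and the Lipschitz condition at $w\in F_n$, together with Cauchy estimates on discs of radius comparable to $\operatorname{dist}(z,E)$, controls $|f'(z)|$ in terms of $n$ and $\sup_K|f|$. This is the delicate Menchoff-style estimate. We will first try to obtain a bound of the form $|f'(z)|\le C$ uniformly on a smaller disc $K'$, using local boundedness of $f$ to control oscillation. This gives $f$ Lipschitz along lines throughout $K'$, and the first step then shows $f$ is analytic in $K'$, the desired contradiction. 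Hence $E=\emptyset$ and $f$ is analytic in $D$.
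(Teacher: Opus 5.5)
The paper gives no proof of this statement. It quotes Tolstov's theorem and uses it as a black box in Proposition~\ref{prop:last_component}, so your proposal has to stand on its own. It does not: there is a genuine gap exactly where Tolstov's theorem goes beyond Looman--Menchoff.

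In Step~2 you need each $F_n$ to be closed, so that Baire's theorem gives a portion $E\cap K\subset F_n$. For fixed $h$, closedness of $\{z:|f(z+h)-f(z)|\le n|h|\}$ (or of $\{|f|\le n\}$) is a lower-semicontinuity property of $|f(\cdot+h)-f(\cdot)|$ (resp.\ $|f|$). Local boundedness supplies no semicontinuity whatsoever, so the argument you invoke is unavailable without continuity of $f$, which is not assumed. What the hypotheses do give is that $f$ is separately continuous, hence of Baire class one. Since it suffices to test rational $h$ (by continuity along the relevant line), $F_n$ is therefore only a $G_\delta$. Baire's theorem on $E$ then yields a portion $E\cap K$ in which $E\cap F_n$ is dense and residual, not one contained in $F_n$. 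Every later step needs the Lipschitz condition at \emph{every} point of $E\cap K$. Removing continuity from the Looman--Menchoff scheme is the entire content of Tolstov's theorem, and your proposal offers no substitute for it.

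Step~3 is also not yet an argument. The Lipschitz condition at $w\in E$ controls $f$ only on the horizontal and vertical lines through $w$. It gives no bound on $\sup_{B(z,r)}|f-f(z)|$ with $r=\operatorname{dist}(z,E)$. Moreover, the lines through $z$ need not meet $E$ near $z$ at all; nothing prevents $E$ from being a single point, or $C\times C$ with $C$ a Cantor set, so your ``closest point $w$'' may not exist. Consequently Cauchy's estimate gives only $|f'(z)|\le 2\sup_K|f|/\operatorname{dist}(z,E)$, and local boundedness cannot upgrade $O(1)$ oscillation to $O(r)$. Controlling $f'$ near $E$ is the hard core of Menchoff's argument. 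There it is done by a global argument, in a setting where $f$ is continuous up to $E$, not by pointwise Cauchy estimates.

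Conversely, the obstacle you flag in Step~1 is not real. Suppose $|f(z+h)-f(z)|\le n|h|$ for all real and purely imaginary $h$ with $|h|\le 1/n$, at every point of a disc of radius at most $1/(2n)$ lying well inside $D$. Then for $z,z'$ in the disc, pass through the corner point $\operatorname{Re}z'+i\operatorname{Im}z$, using the condition at $z$ horizontally and at $z'$ vertically. This gives $|f(z)-f(z')|\le 2n|z-z'|$, so $f$ is Lipschitz and the mollification detour is unnecessary.
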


Note that the Cauchy--Riemann equations can be written in the form
\[
\frac{\partial f}{\partial y}
=
i\frac{\partial f}{\partial x}.
\]

\textbf{2. Structure of the three-dimensional commutative Banach algebra
$\mathbb{A}_3$.}

Consider the three-dimensional commutative associative Banach algebra
$\mathbb{A}_3$ with identity $1$ over the field $\mathbb{C}$, whose basis
is the triple $\{1,\rho,\rho^2\}$, where
\[
\rho^3 = 0.
\]
Note that this basis is a Cartan basis \cite{cartan}.

Define the Euclidean norm of an element of the algebra by
\[
\|a+b\rho+c\rho^2\|
:=
\sqrt{|a|^2+|b|^2+|c|^2},
\qquad
a,b,c\in\mathbb{C}.
\]

The algebra $\mathbb{A}_3$ has a unique maximal ideal
\[
\mathcal{I}
:=
\{\lambda_1\rho+\lambda_2\rho^2
:
\lambda_1,\lambda_2\in\mathbb{C}\},
\]
which is also its radical.

Consider the linear mapping
$f:\mathbb{A}_3\to\mathbb{C}$ defined by
\begin{equation}\label{mult-func}
f(a+b\rho+c\rho^2)=a.
\end{equation}
Since the kernel of $f$ is the maximal ideal $\mathcal{I}$, the mapping
$f$ is a continuous multiplicative functional
(see \cite[p.~135]{Hil_Filips}).

Fix a real three-dimensional subspace
\[
E_3
=
\{\zeta=xe_1+ye_2+ze_3 : x,y,z\in\mathbb{R}\}
\subset \mathbb{A}_3,
\]
where the vectors $e_1,e_2,e_3$ are linearly independent over the field
of real numbers $\mathbb{R}$, but do not necessarily form a basis of
the algebra $\mathbb{A}_3$. We impose only one condition on the choice
of the subspace $E_3$:
\[
f(E_3)=\mathbb{C},
\]
that is, the image of $E_3$ under $f$ is the entire complex plane
(see \cite{Pukh-5,Sh-co}).

\textbf{3. Monogenic functions in the algebra $\mathbb{A}_3$.}

It is well known that there are different notions of differentiability
for mappings in normed linear spaces. In particular, one uses strong
Fr\'echet differentiability and weak G\^ateaux differentiability
(see, for example, \cite{Hil_Filips}), where the corresponding
Fr\'echet and G\^ateaux derivatives are defined as linear operators.

Using the G\^ateaux differential,
I.\,P.~Mel'nichenko \cite{Mel'nichenko75} proposed to consider the
G\^ateaux derivative also as a function defined on the same domain
as the original function.

Let $\Phi:\Omega\to\mathbb{A}_3$ be a function defined in a domain
$\Omega\subset E_3$. If for every point $\zeta\in\Omega$ there exists
an element $\Phi_G'(\zeta)\in\mathbb{A}_3$ such that
\begin{equation}\label{eq:deriv}
\lim_{\delta\to 0+0}
\bigl(
\Phi(\zeta+\delta h)-\Phi(\zeta)
\bigr)\delta^{-1}
=
h\Phi_G'(\zeta)
\qquad
\forall h\in E_3,
\end{equation}
then the function
$\Phi_G':\Omega\to\mathbb{A}_3$
is called the \emph{G\^ateaux derivative} of $\Phi$.

A function $\Phi:\Omega\to\mathbb{A}_3$ is called
\emph{monogenic} in the domain $\Omega\subset E_3$
if $\Phi$ is continuous and has a G\^ateaux derivative at every point
of $\Omega$
(see
\cite{shpakivskyi_plaksa_umzh,overview_plaksa,Plaksa_UMB}).

In \cite{Pl-zb17}, one of the conditions of monogenicity was weakened.
Namely, it was shown that if the G\^ateaux derivative of a function
$\Phi:\Omega\to\mathbb{A}_3$ exists at all points of a domain
$\Omega\subset E_3$, then continuity of $\Phi$ may be replaced by local
boundedness in $\Omega$.

\begin{prop}\label{prop:last_component}
Suppose that all intersections of the domain
$\Omega \subset \mathbb{A}_3$ with hyperplanes parallel to the radical
$\mathcal{I}$ of the algebra $\mathbb{A}_3$ are connected. Let
\[
\Phi(\zeta)=\rho^2\Phi_2(\zeta)
\]
be a locally bounded function satisfying \eqref{eq:deriv} for the vectors
\[
\{\pm 1,\ \pm i,\ \pm \rho,\ \pm i\rho,\ \pm \rho^2,\ \pm i\rho^2\}.
\]
Then
\[
\Phi_2(\zeta)=F_2(f(\zeta))=F_2(z),
\]
where $F_2:D\to\mathbb{C}$ is an analytic function in the domain
$D=f(\Omega)$.
\end{prop}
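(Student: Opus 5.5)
The plan is to write every point of $\Omega$ in Cartan coordinates, $\zeta=z+w_1\rho+w_2\rho^2$ with $z=f(\zeta)=x+iy$, and to read off from \eqref{eq:deriv} the real partial derivatives of $\Phi_2$ in the six real directions $1,i,\rho,i\rho,\rho^2,i\rho^2$. Two facts drive everything. First, \eqref{eq:deriv} holds for both $h$ and $-h$ with the same element $\Phi_G'(\zeta)$, so the right-hand difference quotient in direction $h$ tends to $h\Phi_G'(\zeta)$ and the left-hand one tends to the same value. Hence two-sided real partial derivatives of $\Phi=\rho^2\Phi_2$ exist at every point of $\Omega$ in all six directions. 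Second, since $\Phi$ takes values in $\rho^2\mathbb{C}$, comparing coefficients forces strong algebraic constraints on $\Phi_G'$.

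\textbf{Computing $\Phi_G'$ and the partials.} Write $\Phi_G'(\zeta)=a+b\rho+c\rho^2$.
\begin{itemize}
\item For $h=1$ we get $\rho^2\,\partial_x\Phi_2=a+b\rho+c\rho^2$. Hence $a=b=0$ and $c=\partial_x\Phi_2$, so $\Phi_G'=\rho^2\partial_x\Phi_2$.
\item For $h=i$ we get $\rho^2\,\partial_y\Phi_2=i\rho^2\partial_x\Phi_2$. This gives the Cauchy--Riemann relation $\partial_y\Phi_2=i\,\partial_x\Phi_2$ at \emph{every} point of $\Omega$.
\item For $h\in\{\rho,i\rho,\rho^2,i\rho^2\}$ we have $h\Phi_G'\in\rho^3\mathbb{C}=\{0\}$. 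So the partial derivatives of $\Phi_2$ with respect to $\operatorname{Re}w_1,\operatorname{Im}w_1,\operatorname{Re}w_2,\operatorname{Im}w_2$ vanish identically in $\Omega$.
\end{itemize}

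\textbf{$\Phi_2$ depends only on $z$.} Fix $z_0\in D$ and let $S=\Omega\cap\{f(\zeta)=z_0\}$. This is an open connected subset of the affine plane $z_0+\mathcal I$, by the hypothesis on hyperplanes parallel to the radical. Any two points of $S$ can be joined by a polygonal path inside $S$ whose segments are parallel to the real coordinate axes of $\mathcal I$. On each such segment $\Phi_2$, viewed as a function of one real variable, is differentiable everywhere with zero derivative. Applying the mean value theorem to its real and imaginary parts shows it is constant there. Hence $\Phi_2$ is constant on $S$, and we can define $F_2(z_0)$ to be this constant, giving $\Phi_2(\zeta)=F_2(f(\zeta))$.

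\textbf{$F_2$ satisfies the hypotheses of Tolstov's theorem.} Three things need checking.
\begin{itemize}
\item \emph{Local boundedness.} Given $z_0\in D$, pick $\zeta_0\in\Omega$ with $f(\zeta_0)=z_0$ and a ball $B\subset\Omega$ around $\zeta_0$ on which $\Phi$ is bounded. Then $f(B)$ is a neighbourhood of $z_0$, and $F_2$ is bounded there because $\|\Phi\|=|\Phi_2|$.
\item \emph{Partial derivatives everywhere.} Differentiating in $x$ or $y$ at a point of $B$ along the directions $1$ and $i$ shows that $\partial_xF_2$ and $\partial_yF_2$ exist everywhere in $D$. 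So do the partials of $\operatorname{Re}F_2$ and $\operatorname{Im}F_2$.
\item \emph{Cauchy--Riemann.} We already have $\partial_yF_2=i\partial_xF_2$ everywhere, which is more than the almost-everywhere condition required.
\end{itemize}
Tolstov's theorem, quoted in the Introduction, then gives that $F_2$ is analytic in $D$.

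\textbf{Main obstacle.} I expect the delicate step to be the constancy on the slices. Only pointwise existence of partials is available, not continuity, so the argument must go through axis-parallel polygonal paths and the one-variable mean value theorem rather than a gradient argument. It also relies on connectedness of the slices, which is exactly why that hypothesis appears. The second point needing care is verifying that the one-sided limits for $\pm h$ really combine into two-sided partial derivatives, which is what Tolstov's theorem requires.
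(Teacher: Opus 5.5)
Your proposal is correct and follows the same route as the paper. In both, $h=1$ forces $\Phi_G'\in\rho^2\mathbb{C}$, the vanishing derivatives in the radical directions make $\Phi_2$ constant on the slices $f^{-1}(z_0)\cap\Omega$, the directions $h=\pm1,\pm i$ give $\partial_y F_2=i\,\partial_x F_2$, and Tolstov's theorem finishes the proof. You additionally supply details the paper leaves implicit: combining $\pm h$ into two-sided partials, the axis-parallel polygonal path with the mean value theorem for constancy on slices, and the verification that $F_2$ is locally bounded and has partial derivatives everywhere in $D$.
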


\begin{proof}
The left-hand side of \eqref{eq:deriv} has the form $a\rho^2$.
Therefore, $h\Phi'(\zeta)$ and $\Phi'(\zeta)$ also have this form
(the latter follows by taking $h=1$). Let
\[
\Phi'(\zeta)=\Phi_2'(\zeta)\rho^2.
\]
Substituting this into \eqref{eq:deriv}, we obtain
\begin{equation}
\lim_{\varepsilon\to 0+0}
\bigl(
\Phi_2(\zeta+\varepsilon h)-\Phi_2(\zeta)
\bigr)
\varepsilon^{-1}\rho^2
=
h\,\Phi_2'(\zeta)\rho^2.
\label{eq:deriv_k}
\end{equation}

If
\[
h\in
\{\pm \rho,\ \pm i\rho,\ \pm \rho^2,\ \pm i\rho^2\}
\subset \mathcal{I},
\]
then $h\rho^2=0$. Hence,
\[
\lim_{\varepsilon\to 0+0}
\bigl(
\Phi_2(\zeta+\varepsilon h)-\Phi_2(\zeta)
\bigr)
\varepsilon^{-1}
=
0.
\]
Thus, the directional derivative in the direction $h$ vanishes:
\[
\frac{d\Phi_2}{dh}=0.
\]

If all intersections of $\Omega$ with hyperplanes parallel to the
radical are connected, then the function $\Phi_2(\zeta)$ is constant
on each such intersection. Therefore, the function
\[
F_2(z)=F_2(f(\zeta))=\Phi_2(\zeta)
\]
is well defined.

Substituting $h=\pm 1$ and $h=\pm i$ into \eqref{eq:deriv_k}, we obtain
\[
\lim_{\varepsilon\to 0+0}
\bigl(
\Phi_2(\zeta\pm\varepsilon)-\Phi_2(\zeta)
\bigr)
\varepsilon^{-1}
=
\pm \Phi_2'(\zeta),
\]
\[
\lim_{\varepsilon\to 0+0}
\bigl(
\Phi_2(\zeta\pm i\varepsilon)-\Phi_2(\zeta)
\bigr)
\varepsilon^{-1}
=
\pm i\Phi_2'(\zeta).
\]
Hence,
\[
\lim_{\varepsilon\to 0}
\bigl(
\Phi_2(\zeta+i\varepsilon)-\Phi_2(\zeta)
\bigr)
\varepsilon^{-1}
=
i
\lim_{\varepsilon\to 0}
\bigl(
\Phi_2(\zeta+\varepsilon)-\Phi_2(\zeta)
\bigr)
\varepsilon^{-1}.
\]
Equivalently,
\[
\lim_{\varepsilon\to 0}
\bigl(
F_2(z+i\varepsilon)-F_2(z)
\bigr)
\varepsilon^{-1}
=
i
\lim_{\varepsilon\to 0}
\bigl(
F_2(z+\varepsilon)-F_2(z)
\bigr)
\varepsilon^{-1},
\]
that is,
\[
\frac{\partial F_2}{\partial y}
=
i\frac{\partial F_2}{\partial x}.
\]

Therefore, by Tolstov's theorem \cite{Tolstov}, the function $F_2(z)$
is analytic.
\end{proof}

Note that if the function
\[
\Phi(\zeta)
=
\rho\Phi_1(\zeta)+\rho^2\Phi_2(\zeta)
\]
or
\[
\Phi(\zeta)
=
\Phi_0(\zeta)+\rho\Phi_1(\zeta)+\rho^2\Phi_2(\zeta),
\]
then the function $\rho\Phi(\zeta)$ (or, respectively,
$\rho^2\Phi(\zeta)$) satisfies the previous proposition.
This implies the following statement.

\begin{prop}\label{prop:first_component}
Suppose that all intersections of the domain
$\Omega \subset \mathbb{A}_3$ with hyperplanes parallel to the radical
$\mathcal{I}$ of the algebra $\mathbb{A}_3$ are connected, and let
$\Phi:\Omega\to\mathbb{A}_3$ be a locally bounded function satisfying
\eqref{eq:deriv} for the vectors
\[
\{\pm 1,\ \pm i,\ \pm \rho,\ \pm i\rho,\ \pm \rho^2,\ \pm i\rho^2\}.
\]
Then the first nonzero component $\Phi_*$ of the function $\Phi$ can be
represented in the form
\[
\Phi_*(\zeta)=F(f(\zeta))=F(z),
\]
where $F:D\to\mathbb{C}$ is an analytic function in the domain
$D=f(\Omega)$.
\end{prop}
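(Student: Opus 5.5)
The plan is to reduce the statement to Proposition~\ref{prop:last_component} by multiplying $\Phi$ by a suitable power of $\rho$. This kills every component except the first nonzero one, and moves that component into the $\rho^2$-slot. Write $\Phi=\Phi_0+\rho\Phi_1+\rho^2\Phi_2$. The index of the first nonzero component is understood globally: $\Phi_*=\Phi_k$, where $k$ is the smallest index with $\Phi_k\not\equiv 0$ on $\Omega$. Put $m=2-k\in\{0,1,2\}$ and consider
\[
\Psi(\zeta):=\rho^{m}\Phi(\zeta)=\rho^2\Phi_k(\zeta),
\]
using $\rho^3=0$ and the fact that $\Phi_j\equiv0$ for $j<k$.

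The first step is to check that $\Psi$ satisfies the hypotheses of Proposition~\ref{prop:last_component}. Multiplication by the fixed element $\rho^m$ is a bounded linear operator on $\mathbb{A}_3$; in the Euclidean norm it only shifts coordinates, so $\|\rho^m a\|\le\|a\|$. Hence $\Psi$ is locally bounded. The operator also commutes with limits, so for each $h$ in the listed set of twelve vectors we can multiply \eqref{eq:deriv} for $\Phi$ by $\rho^m$. Using commutativity this gives
\[
\lim_{\delta\to0+0}\bigl(\Psi(\zeta+\delta h)-\Psi(\zeta)\bigr)\delta^{-1}=\rho^m h\,\Phi_G'(\zeta)=h\bigl(\rho^m\Phi_G'(\zeta)\bigr).
\]
So \eqref{eq:deriv} holds for $\Psi$ with derivative $\Psi_G'=\rho^m\Phi_G'$. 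The domain hypothesis on $\Omega$ is the same in both propositions.

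Applying Proposition~\ref{prop:last_component} to $\Psi=\rho^2\Phi_k$ then yields $\Phi_k(\zeta)=F(f(\zeta))$ with $F$ analytic in $D=f(\Omega)$, which is the claim with $F=F_2$ of that proposition.

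The only delicate point is the meaning of ``first nonzero component''. If it were taken pointwise, the index $k$ could vary over $\Omega$, and the multiplication trick would not isolate a single component. I will therefore state explicitly that $k$ is chosen globally, so that $\Phi_j\equiv0$ for all $j<k$; with that reading the argument above is complete. If $\Phi\equiv0$ there is nothing to prove.
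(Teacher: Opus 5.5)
Your proposal is correct and is the paper's own argument: the paper also multiplies $\Phi$ by $\rho$ or $\rho^2$, depending on which component is the first nonzero one, so that the product has the form $\rho^2\Phi_*$ and falls under Proposition~\ref{prop:last_component}. Your explicit checks fill in details the paper leaves implicit: local boundedness via $\|\rho^m a\|\le\|a\|$, the transfer of \eqref{eq:deriv} by continuity and commutativity of multiplication, and the global reading of ``first nonzero component'' (which matches the paper's case split on the form of $\Phi$).
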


In \cite{plaksa_shpakivskii_finite_algebra} it was shown that the
principal extension of an analytic function $F(z)$ to a Banach algebra
can be written in the form
\[
\frac{1}{2\pi i}
\int_{\gamma}
F(t)(t-\zeta)^{-1}\,dt,
\]
where $\gamma$ is a closed rectifiable Jordan curve in
$D=f(\Omega)\subset\mathbb{C}$ enclosing the point $f(\zeta)$.
The principal extension is G\^ateaux differentiable, and each component
of its expansion in the Cartan basis is an analytic function of the
variable $f(\zeta)$. Moreover, the first component of this expansion is
equal to $F(f(\zeta))$.

\begin{theorem}
Suppose that all intersections of the domain
$\Omega \subset \mathbb{A}_3$ with hyperplanes parallel to the radical
of the algebra are connected, and let
$\Phi:\Omega\to\mathbb{A}_3$ be a locally bounded function satisfying
\eqref{eq:deriv} for the vectors
\[
\{\pm 1,\ \pm i,\ \pm \rho,\ \pm i\rho,\ \pm \rho^2,\ \pm i\rho^2\}.
\]
Then the function $\Phi$ admits the representation
\[
\Phi(\zeta)
=
\frac{1}{2\pi i}
\int_{\gamma}
\bigl(
F_0(t)+F_1(t)\rho+F_2(t)\rho^2
\bigr)
(t-\zeta)^{-1}\,dt,
\]
where $F_0$, $F_1$, and $F_2$ are analytic functions in the domain
$D=f(\Omega)$, and $\gamma$ is a closed rectifiable Jordan curve in
$D=f(\Omega)\subset\mathbb{C}$ enclosing the point $f(\zeta)$.
\end{theorem}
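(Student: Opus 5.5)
The plan is to peel off the components of $\Phi$ one at a time, from the first one down. At each stage we subtract a principal extension, and linearity of \eqref{eq:deriv} in $\Phi$ keeps the hypotheses in force.

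Write $\Phi=\Phi_0+\rho\Phi_1+\rho^2\Phi_2$. By Proposition~\ref{prop:first_component}, applied to $\Phi$ itself, we get $\Phi_0(\zeta)=F_0(f(\zeta))$ with $F_0$ analytic in $D$. This assumes $\Phi_0\not\equiv 0$; otherwise we set $F_0=0$ and skip this step. Now put
\[
\Psi_0(\zeta):=\frac{1}{2\pi i}\int_\gamma F_0(t)(t-\zeta)^{-1}\,dt .
\]
By the result of \cite{plaksa_shpakivskii_finite_algebra} quoted above, $\Psi_0$ is G\^ateaux differentiable, so it satisfies \eqref{eq:deriv} for all $h$. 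Each of its components is an analytic function of $f(\zeta)$, so $\Psi_0$ is locally bounded, and its first component equals $F_0(f(\zeta))$. The difference $\Phi-\Psi_0$ is therefore locally bounded, satisfies \eqref{eq:deriv} for the twelve given vectors, and has the form $\rho\widetilde\Phi_1+\rho^2\widetilde\Phi_2$.

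Next, observe that $\widetilde\Phi_1$ is the first possibly nonzero component of $\Phi-\Psi_0$. Applying Proposition~\ref{prop:first_component} again (equivalently, Proposition~\ref{prop:last_component} to $\rho(\Phi-\Psi_0)$) gives $\widetilde\Phi_1=F_1(f(\zeta))$ with $F_1$ analytic. Set
\[
\Psi_1(\zeta):=\frac{\rho}{2\pi i}\int_\gamma F_1(t)(t-\zeta)^{-1}\,dt .
\]
Since $\rho$ annihilates the $\rho^2$-part of the principal extension, $\Psi_1=\rho F_1(f(\zeta))+\rho^2(\cdots)$. It is again G\^ateaux differentiable, because multiplication by a constant commutes with \eqref{eq:deriv}. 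Then $\Phi-\Psi_0-\Psi_1=\rho^2\widehat\Phi_2$, and Proposition~\ref{prop:last_component} gives $\widehat\Phi_2=F_2(f(\zeta))$ with $F_2$ analytic. We also have $\rho^2(t-\zeta)^{-1}=\rho^2(t-z)^{-1}$, where $z=f(\zeta)$. By Cauchy's formula,
\[
\rho^2F_2(z)=\frac{\rho^2}{2\pi i}\int_\gamma F_2(t)(t-\zeta)^{-1}\,dt .
\]
Summing the three pieces gives the stated representation.

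The main obstacle is in the first two steps. There we must check that subtracting the principal extension preserves both hypotheses of the propositions, namely local boundedness and \eqref{eq:deriv} for the listed directions, and that the leading component of the remainder vanishes exactly. The first point follows from the G\^ateaux differentiability and analyticity of the components stated in \cite{plaksa_shpakivskii_finite_algebra}. The second uses the explicit fact that the first component of the extension is $F(f(\zeta))$. One also has to check that the $\rho$-multiple of the extension has $\rho$-component exactly $F_1(z)$. This follows from the expansion $(t-\zeta)^{-1}=(t-z)^{-1}+(\text{radical terms})$, since multiplying by $\rho$ shifts the components up by one.
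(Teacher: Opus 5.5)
Your proposal is correct and follows essentially the same route as the paper: identify the leading component via Proposition~\ref{prop:first_component}, subtract the corresponding principal extension (which preserves local boundedness and \eqref{eq:deriv} for the twelve directions while removing that component), and iterate. You also spell out the steps the paper compresses into ``applying this argument iteratively'': the $\rho$-multiple of the extension of $F_1$ has $\rho$-component exactly $F_1(z)$, and the final $\rho^2F_2(z)$ term is recovered by Cauchy's formula using $\rho^2(t-\zeta)^{-1}=\rho^2(t-z)^{-1}$.
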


\begin{proof}
The proof is based on Proposition~\ref{prop:first_component}
and on the fact that the first component of the principal extension
of an analytic function coincides with the function itself.

Let
\[
\Phi(\zeta)
=
\Phi_0(\zeta)+\Phi_1(\zeta)\rho+\Phi_2(\zeta)\rho^2.
\]
Then, by Proposition~\ref{prop:first_component},
\[
\Phi_0(\zeta)=F_0(z),
\]
where $F_0$ is an analytic function of the complex variable $z$.

The principal extension of $F_0(z)$ to the algebra has first component
equal to $F_0(z)$. Therefore, the difference
\[
\Phi(\zeta)
-
\frac{1}{2\pi i}
\int_{\gamma}
F_0(t)(t-\zeta)^{-1}\,dt
=
\Phi_{11}(\zeta)\rho+\Phi_{12}(\zeta)\rho^2
\]
is also locally bounded and satisfies \eqref{eq:deriv} for the vectors
\[
\{\pm 1,\ \pm i,\ \pm \rho,\ \pm i\rho,\ \pm \rho^2,\ \pm i\rho^2\}.
\]

Applying this argument iteratively, we obtain the desired representation.
\end{proof}

Consider now the real three-dimensional linear subspace
$E_3 \subset \mathbb{A}_3$ defined above, for which
\[
f(E_3)=\mathbb{C}.
\]
The preimage of each point $z \in \mathbb{C}$ is a straight line
\[
f^{-1}(z)\subset E_3.
\]

A basis of the space $E_3$ can be obtained by choosing nonzero vectors
\[
a \in f^{-1}(1), \qquad
b \in f^{-1}(i), \qquad
c \in f^{-1}(0)
\]
in the corresponding preimages.

The proofs of the following theorems are completely analogous to the
proofs of the previous statements.

\begin{theorem}
Let $\Omega \subset E_3$ be a domain whose intersections with lines
parallel to the line $f^{-1}(0)$ are connected. Let
$\Phi:\Omega\to\mathbb{A}_3$ be a locally bounded function satisfying
\eqref{eq:deriv} for the vectors
\[
\{\pm a,\ \pm b,\ \pm c\}.
\]
Then the first nonzero component $\Phi_*$ of the function $\Phi$
can be represented in the form
\[
\Phi_*(\zeta)=F(f(\zeta))=F(z),
\]
where $F:D\to\mathbb{C}$ is an analytic function in the domain
\[
D=f(\Omega).
\]
\end{theorem}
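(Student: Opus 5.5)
I would follow the scheme of Propositions~\ref{prop:last_component} and~\ref{prop:first_component}, now working inside $E_3$ with the real directions $a,b,c$. First I write
\[
\Phi=\Phi_0+\Phi_1\rho+\Phi_2\rho^2 .
\]
If $\Phi_0\not\equiv0$, I multiply by $\rho^2$. If $\Phi_0\equiv0$ but $\Phi_1\not\equiv0$, I multiply by $\rho$. In either case the first nonzero component $\Phi_*$ becomes the $\rho^2$-coefficient of $\Psi=\rho^k\Phi$, and $\Psi$ has the form $\rho^2\Psi_2$ with $\Psi_2=\Phi_*$. Multiplication by the fixed element $\rho^k$ is a bounded linear map that commutes with multiplication by $h$. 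Hence $\Psi$ is still locally bounded and satisfies \eqref{eq:deriv} for $h\in\{\pm a,\pm b,\pm c\}$, with $\Psi'_G=\rho^k\Phi'_G$. So it suffices to treat $\Phi=\rho^2\Phi_2$.

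For such $\Phi$, the left-hand side of \eqref{eq:deriv} lies in $\mathbb{C}\rho^2$. I write $h\Phi'_G(\zeta)=\rho^2 L_h(\zeta)$. Multiplying by $\rho^2$ on both sides only sees the $f$-part of $h$, so $\rho^2 h\,\Phi'_G=f(h)\rho^2\Phi'_G$. The key point is that $h\Phi'_G(\zeta)\in\mathbb{C}\rho^2$ forces $h\Phi'_G(\zeta)=f(h)\,\rho^2\,w(\zeta)$ for a single complex number $w(\zeta)$.

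To see this, take $h=a$ with $f(a)=1$. Then $a$ is invertible, $a=1+r$ with $r\in\mathcal I$, and $a^{-1}\cdot a\Phi'_G=\Phi'_G$. Since $a^{-1}\in 1+\mathcal I$, multiplication by $a^{-1}$ fixes $\mathbb{C}\rho^2$, so $\Phi'_G\in\mathbb{C}\rho^2$; write $\Phi'_G=w\rho^2$. Then $h\Phi'_G=f(h)w\rho^2$ for every $h$, and in particular:
\begin{itemize}
\item for $h=\pm c$ ($f(c)=0$), the one-sided derivatives of $\Phi_2$ along $\pm c$ vanish;
\item for $h=\pm a$, they equal $\pm w$;
\item for $h=\pm b$, they equal $\pm i w$.
\end{itemize}

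Next, the two-sided derivative of $\Phi_2$ along $c$ exists and is zero everywhere in $\Omega$. On each line parallel to $f^{-1}(0)$ the intersection with $\Omega$ is connected, hence an interval. A function on an interval with zero derivative everywhere is constant by the mean value theorem applied to real and imaginary parts. So $F(z):=\Phi_2(\zeta)$ for $f(\zeta)=z$ is well defined on $D=f(\Omega)$. Since $f|_{E_3}$ is a real-linear surjection onto $\mathbb{C}$ with $f(a)=1$ and $f(b)=i$, moving $\zeta$ along $a$ (resp. $b$) moves $z$ along $x$ (resp. $y$). Hence $\partial F/\partial x=w$ and $\partial F/\partial y=iw$ exist everywhere in $D$, and $\partial F/\partial y=i\,\partial F/\partial x$ holds everywhere.

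Finally, $F$ is locally bounded on $D$. Given a compact set in $D$, I lift it to a compact subset of $\Omega$: $f$ restricted to the plane spanned by $a,b$ is a homeomorphism. The complication is that this plane section may not lie in $\Omega$, so I would instead lift locally, using that each $z\in D$ has a preimage $\zeta_0\in\Omega$ with a small neighbourhood $\zeta_0+U\subset\Omega$, where $U$ is spanned by small multiples of $a,b$. Local boundedness of $\Phi$ near $\zeta_0$ then gives boundedness of $F$ near $z$. Tolstov's theorem then yields that $F$ is analytic.

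The main obstacle I expect is the algebraic step showing that all the limits are governed by one complex $w$. This is needed because \eqref{eq:deriv} is only assumed for $a,b,c$, not for $1$ and $i$. The invertibility of $a$ and the identity $\rho^2 h=f(h)\rho^2$ resolve it.
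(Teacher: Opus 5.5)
Your proof is correct and follows the route the paper intends by ``completely analogous'': reduce to $\rho^2\Phi_2$ by multiplying by $\rho$ or $\rho^2$, use the vanishing derivative along $\pm c$ and connectedness of the fibres to define $F$, get $\partial F/\partial y=i\,\partial F/\partial x$ from the $\pm a,\pm b$ directions, and apply Tolstov's theorem. You also supply two steps the paper leaves implicit: invertibility of $a$ to show $\Phi'_G\in\mathbb{C}\rho^2$ (needed because the paper's ``take $h=1$'' is unavailable when only $\pm a,\pm b,\pm c$ are assumed), and a local lift showing $F$ is locally bounded on $D$.
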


\begin{theorem}
Let $\Omega \subset E_3 \subset \mathbb{A}_3$ be a domain whose
intersections with lines parallel to the line $f^{-1}(0)$ are
connected. Let $\Phi:\Omega\to\mathbb{A}_3$ be a locally bounded
function satisfying \eqref{eq:deriv} for the vectors
\[
\{\pm a,\ \pm b,\ \pm c\}.
\]
Then the function $\Phi$ admits the representation
\[
\Phi(\zeta)
=
\frac{1}{2\pi i}
\int_{\gamma}
\bigl(
F_0(t)+F_1(t)\rho+F_2(t)\rho^2
\bigr)
(t-\zeta)^{-1}\,dt,
\]
where $F_0$, $F_1$, and $F_2$ are analytic functions in the domain
\[
D=f(\Omega),
\]
and $\gamma$ is a closed rectifiable Jordan curve in
$D=f(\Omega)\subset\mathbb{C}$ enclosing the point $f(\zeta)$.
\end{theorem}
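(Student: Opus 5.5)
The plan is to mirror the proof of the earlier representation theorem on the whole algebra, using the $E_3$ version of Proposition~\ref{prop:first_component} (the theorem immediately preceding this one) in place of Proposition~\ref{prop:first_component} itself. Write $\Phi=\Phi_0+\Phi_1\rho+\Phi_2\rho^2$ on $\Omega\subset E_3$.

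\emph{Step 1: the scalar component.} Apply the preceding theorem to $\Phi$. If $\Phi_0\not\equiv 0$, this gives $\Phi_0(\zeta)=F_0(f(\zeta))$ with $F_0$ analytic in $D=f(\Omega)$; otherwise set $F_0\equiv 0$.

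\emph{Step 2: subtract the principal extension of $F_0$.} Set
\[
\Psi_1(\zeta)=\Phi(\zeta)-\frac{1}{2\pi i}\int_{\gamma}F_0(t)(t-\zeta)^{-1}\,dt .
\]
For $\zeta\in E_3$, $f(\zeta)$ ranges over $D$, so the integral is well defined. By the cited result of \cite{plaksa_shpakivskii_finite_algebra}:
\begin{itemize}
\item the principal extension is G\^ateaux differentiable, in particular along $\pm a,\pm b,\pm c$;
\item it is continuous, hence locally bounded;
\item its first component equals $F_0(f(\zeta))$.
\end{itemize}
Therefore $\Psi_1$ is locally bounded, satisfies \eqref{eq:deriv} for $\{\pm a,\pm b,\pm c\}$, and has the form $\Psi_1=\Phi_{11}\rho+\Phi_{12}\rho^2$.

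\emph{Step 3: the $\rho$-component.} Apply the preceding theorem to $\Psi_1$. Its first nonzero component gives $\Phi_{11}=F_1(f(\zeta))$ with $F_1$ analytic. Multiplying $(t-\zeta)^{-1}$ by $\rho$ shows that the first component of $\frac{1}{2\pi i}\int_\gamma F_1(t)\rho(t-\zeta)^{-1}dt$ is $F_1(f(\zeta))\rho$, because
\[
\rho(t-\zeta)^{-1}=\rho\,(t-f(\zeta))^{-1}+(\text{terms in }\rho^2).
\]
Subtracting this integral leaves $\Psi_2=\Phi_{22}\rho^2$, again locally bounded and differentiable in the required directions. One more application gives $\Phi_{22}=F_2(f(\zeta))$. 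Since $\rho^2(t-\zeta)^{-1}=\rho^2(t-f(\zeta))^{-1}$, we get exactly $\Psi_2=\frac{1}{2\pi i}\int_\gamma F_2(t)\rho^2(t-\zeta)^{-1}dt$ by the Cauchy formula. Summing the three pieces yields the stated representation.

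\emph{Main obstacle.} The preceding theorem is phrased for the ``first nonzero component,'' and here it must be applied to functions of the form $\rho\Psi$ or $\rho^2\Psi$. So one has to check that its proof goes through in $E_3$ when the function takes values in $\rho\mathbb{A}_3$ or $\rho^2\mathbb{A}_3$.
\begin{itemize}
\item For $h=c\in f^{-1}(0)$, we have $c\in\mathcal{I}\cap E_3$, so $c\rho^2=0$ and $c\rho=\lambda\rho^2$. For the leading coefficient, the directional derivative along $\pm c$ therefore vanishes.
\item By the connectedness hypothesis on lines parallel to $f^{-1}(0)$, the leading coefficient factors through $f$.
\item Along $a$ and $b$, the leading coefficients of $a\Phi'_G$ and $b\Phi'_G$ are $1\cdot(\cdot)$ and $i\cdot(\cdot)$. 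This gives $\partial_y F=i\,\partial_x F$ in $D$ everywhere: the two-sided partials exist because both $\pm a$ and $\pm b$ are admissible directions.
\item Local boundedness of $F$ follows from that of $\Phi$.
\end{itemize}
Tolstov's theorem then gives analyticity. I expect the only delicate point to be that the fibers of $f|_{\Omega}$ are exactly the lines parallel to $f^{-1}(0)$, so that $F$ is well defined on $D$.
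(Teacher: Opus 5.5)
Your proposal is correct and follows the paper's route. The paper proves this theorem ``completely analogously'' to the $\mathbb{A}_3$ version: it extracts $F_0$ via the first-nonzero-component result, subtracts the principal extension, and iterates on the $\rho$- and $\rho^2$-parts, exactly as you do. The only cosmetic difference is that you re-check the leading-coefficient argument directly for $\rho\mathbb{A}_3$- and $\rho^2\mathbb{A}_3$-valued functions (using $c\in\mathcal{I}$, $f(a)=1$, $f(b)=i$), whereas the paper reduces to the case $\rho^2\Phi_2$ by multiplying by $\rho$ or $\rho^2$.
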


\textbf{Funding Acknowledgments.}
This work was supported by a grant from the
Simons Foundation (00014586, M.\,V.~T.).

\end{document}